\documentclass{amsart}
\usepackage{amsthm,amssymb}

\usepackage{hyperref} 

\usepackage{xcolor} 

\newcommand{\N}{\ensuremath{\mathbb{N}}}
\newtheorem{defi}{Definition}
\newtheorem{theorem}[defi]{Theorem}
\newtheorem{lemma}[defi]{Lemma}
\newtheorem{cor}[defi]{Corollary}
\newtheorem{ques}[defi]{Question}

\title{The Vietoris hyperspace of finite sets of Erd\H{o}s space}
\author[A. Zaragoza]{Alfredo Zaragoza}
\address[A. Zaragoza]{Departamento de Matemáticas, Facultad de Ciencias, Universidad Nacional Autónoma de México, Circuito Exterior s/n, Ciudad Universitaria, Coyoacán, 04510, Mexico city, Mexico
}
\email[A. Zaragoza]{soad151192@icloud.com}
\thanks{This work is part of the doctoral work of the author at UNAM, Mexico city, under the direction of R. Hernández-Gutiérrez. The author was supported by a CONACyT doctoral scholarship with number 696239}
\keywords{Erd\H{o}s space, hyperspace, cohesive, almost zero dimensional}
\subjclass[2010]{54B20, 54A10, 54F50, 54F65}

\begin{document}

\begin{abstract}
Recently, David S. Lipham has shown that if $X$ is an Erd\H{o}s space factor then the Vietoris hyperspace $\mathcal{F}(X)$ of finite subsets of $X$ is an Erd\H{o}s space factor. In this short note we prove that if $\mathfrak{E}$ denotes Erd\H{o}s space then $\mathcal{F}(\mathfrak{E})$ is in fact homeomorphic to $\mathfrak{E}$.
\end{abstract}

\maketitle

\section{Introduction}
Every topological space in this article is assumed to be metrizable and separable.
Erd\H{o}s space is defined to be the space
$$ \mathfrak{E} = \{(x_n)_{n\in \omega} \in  \ell^2 : x_i \in  \mathbb{Q} , \textit{for all } i\in \omega \},$$
where $\ell^2$ is the Hilbert space of square-summable sequences of real numbers.
Erd\H{o}s space was introduced by Erd\H{o}s in 1940 in \cite{er} as an example of a totally disconnected and non-zero dimensional space.
For a space $X$, $\mathcal{K}(X)$ denotes the hyperspace of non-empty compact subsets of $X$ with the
Vietoris topology. For any $n\in  \N$, $\mathcal{F}_n(X)$ 
is the subspace of $\mathcal{K}(X)$ consisting
of all non-empty subsets that have cardinality less than or equal to $n$, and $\mathcal{F}(X)$ is the subspace of $\mathcal{K}(X)$ of finite subsets of $X$.

In \cite{zaragoza}, the author proved that $\mathcal{F}_n(\mathfrak{E})$ is homeomorphic to $\mathfrak{E}$ for all $n\in\mathbb{N}$. Soon after that, David S. Lipham proved in \cite{lipham} that if $X$ is an Erd\H{o}s space factor then $\mathcal{F}(X)$ is an Erd\H{o}s space factor. The objective of this note is to round off this topic with the following result. 

\begin{theorem}\label{FE}
$\mathcal{F}(\mathfrak{E})$ is homeomorphic to $\mathfrak{E}$.
\end{theorem}

We will also give some indirect consequences of the theorem.

\section{Definitions and preliminaries}

In this paper, $\N$ is the set of positive integers and $\omega=\{0\}\cup\N$ is the set of natural numbers. Given $n\in \N$ and subsets $U_1,\ldots, U_n$ of a topological space $X$, $\langle  U_{1},\ldots ,U_{n}\rangle$ denotes the collection $\left\lbrace   F \in \mathcal{K}(X):F\subset \bigcup_{k=1}^n U_k,\, F\cap U_{k}\neq \emptyset \textit{ for } k \leq n \right\rbrace $. Recall that the Vietoris topology on $\mathcal{K}(X)$ has as its canonical base all the sets of the form $\langle  U_{1},\ldots ,U_{n}\rangle$, where $U_k$ is a non-empty open subset of $X$ for each $k\leq n$.

In \cite{ME}, Dijkstra and van Mill gave an intrinsic characterization of Erd\H{o}s space. We will use this characterization so we first recall some definitions.

\begin{defi}[{\cite[Remark 2.4]{ME}}]\label{eazd} 
A topological space $(X,\mathcal{T})$ is almost zero-dimensional if
there is a zero-dimensional topology $\mathcal{W}$ in $X$ such that $\mathcal{W}$ is coarser than $\mathcal{T}$ and has the property
that every point in $X$ has a local neighborhood base consisting of sets that are closed with respect
to $\mathcal{W}$.
\end{defi}

\begin{defi}[{\cite[Definition 5.1]{ME}}] 
Let $X$ be a space and let $\mathcal{A}$ be a collection of subsets of $X$.
The space $X$ is called $\mathcal{A}$-cohesive if every point of the space has a neighborhood
that does not contain nonempty proper clopen subsets of any element of $\mathcal{A}$. 
\end{defi}

We refer the reader to \cite{Ke} for the basic theory of trees.

\begin{theorem}[{\cite[Theorem 8.13, p. 46]{ME}}]\label{EE} A nonempty space $E$ is homeomorphic to $\mathfrak{E}$ if and only if there exists topology $\mathcal{W}$ on $E$ that witnesses the almost zero-dimensionality of $E$, a nonempty tree $T$ over a countable alphabet and subspaces $E_s$ of $E$ that are closed with
respect to $\mathcal{W}$ for each $s\in T$ such that:

\begin{enumerate}
\item $E_{\emptyset} = E$ and $E_s =\bigcup \{E_t : t \in succ(s)\}$ whenever $s \in T$,
\item each $x\in E$ has an open neighborhood $U$ that is an anchor in $(E,\mathcal{W})$, that is, for every $t\in [T ]$ we either have $E_{t\restriction k} \cap U = \emptyset$ for some
$k \in \omega$ or the sequence $E_{t\restriction k_0},\ldots, E_{t\restriction n},\ldots $ converges in $(E,\mathcal{W})$ to a point.

\item for each $s \in T$ and $t \in succ(s)$,  $E_t$ is nowhere dense in $E_s$,
and
\item  $E$ is $\{Es : s \in T\}$-cohesive.
\end{enumerate}
\end{theorem}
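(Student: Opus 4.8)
The statement is the Dijkstra--van Mill intrinsic characterization of Erd\H{o}s space, and being a biconditional it splits into necessity and sufficiency; the plan is to treat these separately, since necessity only asks us to exhibit the advertised data on the concrete model $\mathfrak{E}\subseteq\ell^2$, whereas sufficiency --- reconstructing $\mathfrak{E}$ from abstract data --- carries the real content. For necessity I would let $\mathcal{W}$ be the topology that $\mathfrak{E}$ inherits from the product topology on $\mathbb{Q}^\omega$. This $\mathcal{W}$ is zero-dimensional and coarser than the norm topology $\mathcal{T}$, and the crucial observation is that the norm is lower semicontinuous with respect to $\mathcal{W}$ (being a supremum of the product-continuous partial sums), so closed norm balls are $\mathcal{W}$-closed; as these balls form a $\mathcal{T}$-neighborhood base, $\mathcal{W}$ witnesses almost zero-dimensionality.

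Continuing with necessity, I would take the alphabet to be $\mathbb{Q}$, let $T=\mathbb{Q}^{<\omega}$, and for $s=(q_0,\dots,q_{n-1})$ put $E_s=\{x\in\mathfrak{E}:x_i=q_i\text{ for }i<n\}$. Condition (1) is immediate from this definition, and (3) holds because fixing one further rational coordinate is norm-closed with empty interior in $E_s\cong\mathfrak{E}$. For the anchor condition (2) I would let $U$ be any bounded norm neighborhood of a point: along a branch $t\in[T]$ the sets $E_{t\restriction k}$ agree on ever longer initial segments, so if the branch is square-summable they converge in $\mathcal{W}$ to the corresponding point of $\mathfrak{E}$, while if it is not square-summable the growing partial sums force $E_{t\restriction k}\cap U=\emptyset$ for large $k$; this dichotomy is exactly condition (2). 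Finally (4) is the cohesion of $\mathfrak{E}$, a fundamental property that I would quote and transfer to each $E_s\cong\mathfrak{E}$.

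For sufficiency I would build an embedding $h\colon E\to\ell^2$ from the tree data. Following each point down the decomposition (1) assigns to it a branch of $T$, and since $\mathcal{W}$ is zero-dimensional with each $E_s$ being $\mathcal{W}$-closed, I would manufacture a countable family of bounded $\mathcal{W}$-continuous functions that records the successive letters of this branch and, exploiting the abundance of $\mathcal{W}$-clopen sets, separates the points of $E$; assembling them diagonally gives a map into $\ell^2$. The anchor condition (2) is what guarantees $h(x)$ is square-summable --- convergent branches contribute summable tails while divergent ones are excluded by the neighborhood $U$ --- and (3) together with the nowhere-density keeps distinct branches genuinely apart, so that $h$ becomes a homeomorphism from $(E,\mathcal{W})$ onto its image carrying $\mathcal{W}$ to the product topology and $\mathcal{T}$ to the norm topology.

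I expect the main obstacle to be the last step: showing the image is homeomorphic to $\mathfrak{E}$ itself rather than to a zero-dimensional space, to complete Erd\H{o}s space, or to some other almost zero-dimensional variant. Here I would have to transport the cohesion hypothesis (4) across $h$ to prove that no point of the image has a neighborhood splitting off a nonempty proper clopen subset of any $h(E_s)$, which is what rules out a zero-dimensional image; and I would have to read off from the tree the correct completeness profile --- the non-converging branches are precisely the ``missing'' limit points that make $\mathfrak{E}$ non-Polish --- in order to distinguish $\mathfrak{E}$ from its complete analogue. Reconciling this global, non-local cohesion with the coordinatewise embedding, and invoking the uniqueness theory for cohesive almost zero-dimensional spaces to close the argument, is the technical heart and the place where the full machinery of $C$-sets and the witness-topology/norm-topology dichotomy is needed.
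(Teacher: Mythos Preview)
The paper does not prove this theorem; it is quoted verbatim from Dijkstra and van Mill \cite[Theorem~8.13]{ME} and used as a black-box characterization. Its only role in the paper is that the \emph{sufficiency} direction is applied to $\mathcal{F}(\mathfrak{E})$: the author constructs a witness topology $\mathcal{W}'$, a tree, closed sets $X_s$, and anchor neighborhoods $\mathcal{U}_F$ on $\mathcal{F}(\mathfrak{E})$, verifies conditions (1)--(4), and then invokes the theorem to conclude $\mathcal{F}(\mathfrak{E})\cong\mathfrak{E}$. There is therefore no proof in the paper for your proposal to be compared against.

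As a standalone sketch of the Dijkstra--van Mill result, your necessity argument is the standard one and is essentially correct. The sufficiency sketch, however, has a real gap: the actual argument in \cite{ME} does not embed $E$ directly into $\ell^2$ by reading coordinates off the tree. It passes through an intermediate universal model (complete Erd\H{o}s space realized via endpoints of a Lelek fan, equivalently the graph of a USC function over a zero-dimensional base) together with an unknotting/absorption theorem for such objects; conditions (1)--(4) are tailored to feed into that machinery. In your outline the map $h$ has no clear reason to land in $\ell^2$ --- the anchor condition governs $\mathcal{W}$-convergence of the \emph{sets} $E_{t\restriction k}$, not square-summability of any numerical encoding of the branch --- and the closing appeal to a ``uniqueness theory for cohesive almost zero-dimensional spaces'' is circular, since that uniqueness is precisely what Theorem~\ref{EE} establishes.
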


For each $n\in\N$, let $\varphi_n:X^n\to \mathcal{F}_n(X)$ be the function defined by $\varphi_n(x_1,\ldots,x_n)=\{x_1,\ldots x_n\}$. It is know that this function is continuous, finite-to-one and in fact it is a quotient \cite{Sub}. 

\begin{lemma}\label{Coh}
Let $X$ be a space that is $\{A_s: s\in S\}$-cohesive, witnessed by a base $\mathcal{B}$ of open sets. Consider the following collection of subsets of $\mathcal{F}(X)$:
$$
\mathcal{A}=\left\{\varphi_n[A_{s_1}\times\cdots\times A_{s_n}]:n\in\mathbb{N},\, \forall i\in\{1,\ldots,n\}\, (s_i\in S)\}\right.
$$
Then $\mathcal{F}(X)$ is $\mathcal{A}$-cohesive, and the open sets that witness this may be taken from the collection $\mathcal{C}=\left\{\langle U_1,\ldots, U_n\rangle:\forall i\in\{1,\ldots,n\}\, (U_i\in\mathcal{B})\right\}$.
\end{lemma}
\begin{proof}
Let $F\in \mathcal{F}(X)$, suppose that $F=\{x_1, \ldots ,x_k\}$ with $x_j\neq x_i$ if $i\neq j$. For each $j\in\{1,\ldots,n\}$, let $V_j\in \mathcal{B}$ with $x_j\in V_j$. We can assume that if $i\neq j$ then $V_i\cap V_j=\emptyset$. Let $V=\langle V_1,\ldots, V_k\rangle$, note that $F\in V$. We claim that $V$ does not contain clopen subsets of any element of $\mathcal{A}$.

Suppose there are $s_1,\ldots s_m \in S$ such that $V$  contains a non-empty proper clopen subset $O$ of $\varphi_m[A_{s_1}\times\cdots\times A_{s_m}]$. As $V\cap\mathcal{F}_{k-1}(X)=\emptyset$, it follows that $m\geq k$.  If $i\in(k,m]$, 
we define $V_i=V_k$. In this way, $V=\langle V_1,\ldots, V_m\rangle$.
Thus $O\cap \mathcal{F}_m(X)$ is a clopen subset of $ \mathcal{F}_m(X)$. Let $x=(x_1, \ldots,x_{k}, x_{k+1},\ldots, x_m)$ where $x_k=x_{k+1}=\ldots =x_m$. Note that $x\in V_1\times \ldots \times V_m $.
Let $g_m=\varphi_m\restriction A_{s_{1}}\times \ldots \times A_{s_{{m}}}$ and $C=g_m^{\leftarrow}[O\cap \mathcal{F}_m(X)]\cap (V_1\times \ldots \times V_m)$. By Proposition  2.6 of \cite {zaragoza}, $C$ is a clopen subset of $A_{s_{1}}\times \ldots \times A_{s_{{m}}}$ such that $C\subset V_1\times \ldots \times V_m $; this is a contradiction ( see \cite[Remark 5.2]{van}).
\end{proof}

\section{Proof of the theorem}
By Theorem \ref{EE} there is a topology $\mathcal{W}$ for $\mathfrak{E}$ which is witness to the almost zero dimensionality of $\mathfrak{E}$, a countable tree $ T $, a family of sets $\mathcal{E}=\{ E_s :s\in T\}$ 
which are closed with respect to $\mathcal{W}$, and for each $x\in\mathfrak{E}$ an open neighborhood $U_x$ that satisfy the conditions of Theorem \ref{EE} for $\mathfrak{E}$.

Let $\mathcal{W}_n$ the topology of $\mathcal{F}_n((\mathfrak{E},\mathcal{W}))$, $T^n=\{s_1*\ldots *s_n: s_1, \ldots, s_n\in T \textit{ and } \vert s_1\vert =\ldots =\vert s_n\vert\}$ and for each $s_1*\ldots *s_n\in T^n$ let $H_{s_1*\ldots* s_n}$ be the subset $\varphi_n[E_{s_1}\times\cdots\times E_{s_n}]$ of $\mathcal{F}_n(\mathfrak{E})$. 

We need to define the neighborhoods that will work as anchors. However, in Theorem 1.2 of \cite{zaragoza}, the neighborhoods that we proposed as anchors depend on the branch. In the following Lemma we correct that mistake.

Let $F=\{x_1,\ldots,x_k\}\in\mathcal{F}(\mathfrak{E})$. For each $j\leq k$, 
let $U_{x_j} $ be a neighborhood of $ x_j $ which is anchor in $(\mathfrak{E},\mathcal{W})$. Let $\mathcal{U}_F=\langle U_{x_1},\ldots, U_{x_k}\rangle$.

\begin{lemma}\label{ancla}
If $F\in\mathcal{F}_n(X)$, 
the set $\mathcal{U}_F\cap \mathcal{F}_n(X)$ is an anchor for $(\mathcal{F}_n(\mathfrak{E}),\mathcal{W}_n)$.
\end{lemma}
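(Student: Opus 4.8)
The plan is to unwind the definition of an anchor for $(\mathcal{F}_n(\mathfrak{E}),\mathcal{W}_n)$ with respect to the tree $T^n$ and the family $\mathcal{E}^n = \{H_{s_1*\cdots*s_n} : s_1*\cdots*s_n \in T^n\}$, and to reduce everything to the fact that each $U_{x_j}$ is already an anchor in $(\mathfrak{E},\mathcal{W})$. So fix $F = \{x_1,\ldots,x_k\}$ with $k \le n$ and let $t = t_1 * \cdots * t_n \in [T^n]$ be an infinite branch; here each $t_i \in [T]$ and the levels are synchronized. I want to show that either $H_{t\restriction m} \cap (\mathcal{U}_F \cap \mathcal{F}_n(\mathfrak{E})) = \emptyset$ for some $m$, or the sequence $(H_{t\restriction m})_m$ converges in $(\mathcal{F}_n(\mathfrak{E}),\mathcal{W}_n)$ to a point of $\mathcal{F}_n(\mathfrak{E})$.

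First I would observe that, since $\varphi_n$ is continuous and the product topology on $(\mathfrak{E},\mathcal{W})^n$ maps onto $\mathcal{W}_n$, convergence in $(\mathcal{F}_n(\mathfrak{E}),\mathcal{W}_n)$ of the sets $H_{t\restriction m} = \varphi_n[E_{t_1\restriction m}\times\cdots\times E_{t_n\restriction m}]$ can be read off from the behaviour of the individual descending sequences $(E_{t_i\restriction m})_m$ in $(\mathfrak{E},\mathcal{W})$: if for each $i \le n$ the sequence $E_{t_i\restriction m}$ converges in $(\mathfrak{E},\mathcal{W})$ to a point $p_i$, then $H_{t\restriction m}$ converges in $\mathcal{W}_n$ to $\varphi_n(p_1,\ldots,p_n) = \{p_1,\ldots,p_n\}$. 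The key step is then a case analysis driven by the fact that each $U_{x_j}$ is an anchor in $(\mathfrak{E},\mathcal{W})$. For each coordinate $i \le n$, apply the anchor property of every $U_{x_j}$ to the branch $t_i$: either $E_{t_i\restriction m} \cap U_{x_j} = \emptyset$ eventually (for every $j$), or there is some $j$ for which $E_{t_i\restriction m}$ converges in $(\mathfrak{E},\mathcal{W})$ to a point. If for some coordinate $i$ we have $E_{t_i\restriction m} \cap U_{x_j} = \emptyset$ for all $j$ and all large $m$, I claim $H_{t\restriction m} \cap (\mathcal{U}_F\cap\mathcal{F}_n(\mathfrak{E})) = \emptyset$ for those $m$: indeed any $G \in H_{t\restriction m}$ is of the form $\{y_1,\ldots,y_n\}$ with $y_i \in E_{t_i\restriction m}$, hence $y_i \notin \bigcup_{j\le k} U_{x_j}$, so $G \notin \langle U_{x_1},\ldots,U_{x_k}\rangle = \mathcal{U}_F$. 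Otherwise, for every coordinate $i$ there is a $j$ making $(E_{t_i\restriction m})_m$ $\mathcal{W}$-convergent, and then by the previous paragraph $(H_{t\restriction m})_m$ converges in $\mathcal{W}_n$ to a finite set.

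The main obstacle I expect is the first reduction: verifying carefully that $\varphi_n$ is continuous as a map $(\mathfrak{E},\mathcal{W})^n \to (\mathcal{F}_n(\mathfrak{E}),\mathcal{W}_n)$ and that $\mathcal{W}_n$-convergence of $H_{t\restriction m}$ really does follow from coordinatewise $\mathcal{W}$-convergence of the $E_{t_i\restriction m}$'s. This amounts to a Vietoris-topology computation: given a basic $\mathcal{W}_n$-neighborhood $\langle W_1,\ldots,W_r\rangle$ of $\{p_1,\ldots,p_n\}$, one distributes the $p_i$'s among the $W_\ell$'s, uses $\mathcal{W}$-convergence $E_{t_i\restriction m}\to p_i$ to push $E_{t_i\restriction m}$ into the appropriate $W_\ell$ for large $m$, and checks the two Vietoris conditions ($H_{t\restriction m}\subset\bigcup W_\ell$ and $H_{t\restriction m}\cap W_\ell\neq\emptyset$) hold simultaneously for all large $m$; the descending nature of the branch and the finitely many coordinates make this manageable but slightly fiddly. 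One also has to be mildly careful that $\mathcal{U}_F\cap\mathcal{F}_n(\mathfrak{E})$ is an open set of $(\mathcal{F}_n(\mathfrak{E}),\mathcal{W}_n)$ containing $F$, but that is immediate from the definition of $\mathcal{U}_F$ and the fact that each $U_{x_j}$ is $\mathcal{T}$-open in $\mathfrak{E}$, hence $\mathcal{W}$-open if we are using the coarser topology, or simply open in the ambient topology if the anchor condition is stated there — either way the Vietoris set $\langle U_{x_1},\ldots,U_{x_k}\rangle$ is open in the relevant topology.
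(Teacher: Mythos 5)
Your proof is correct and follows essentially the same route as the paper's: both reduce the anchor property of $\mathcal{U}_F\cap\mathcal{F}_n(\mathfrak{E})$ to the coordinatewise anchor property of the finitely many $U_{x_j}$, using the fact that the sets $E_{t_i\restriction m}$ decrease along the branch to get either eventual disjointness or $\mathcal{W}$-convergence in each coordinate, and then push coordinatewise convergence through the continuous map $\varphi_n$ to get $\mathcal{W}_n$-convergence of $H_{t\restriction m}$. The only difference is organizational (you argue by a per-coordinate dichotomy where the paper assumes non-emptiness throughout and applies a pigeonhole argument), and the Vietoris computation you flag as the delicate step is exactly the one the paper asserts without detail.
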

\begin{proof}

Let $ F=\{x_1,\ldots,x_k\}\in\mathcal{F}_n(\mathfrak{E})$, and  $\mathcal{U}_F$ defined as above. Consider $ t \in [T^n] $. 
If there exist $i\in \omega$ such that $\mathcal{U}_F\cap\varphi_n[E_{t_1\restriction i}\times \ldots \times E_{t_n\restriction i}]=\emptyset$ then we are finished. Now suppose that for each $ i \in \omega $ 
$$ (*)\textit{  }\mathcal{U}_F \cap \varphi_n[E_{t_1\restriction i} \times \ldots \times E_{t_n\restriction i}] \neq\emptyset.$$
We claim that for all $ i \in \omega $ and $ j \leq n $, there exists $l(i,j)\in \omega$ such that, $U_{l(i,j)}\cap E_{t_j\restriction i} \neq \emptyset$.
By $(*)$ there exists $(y_1,\ldots, y_n)\in E_{t_1\restriction i}, \times \ldots \times E_{t_n\restriction i}$ such that $$\{y_1,\ldots, y_n\}=\varphi_n(y_1,\ldots, y_n)\in \mathcal{U}_F=\langle U_{x_1},\ldots U_{x_k}\rangle.$$
So there is $l\leq n$ such that $y_j\in U_{x_l}$; this proves the claim.

If we fix $j$, this defines a function $i \mapsto l(i,j) $ with domain $\omega $ and codomain $\{1,\ldots, k\}$. This implies that there exists an infinite $ A \subset \omega$  and fixed $l_j\in \N $ such that $E_{t_j\restriction i}\cap U_{l_j}\neq\emptyset$ for each $i\in A$.
As $E_{t_j\restriction s} \subset E_{t_j\restriction t}$ if $s<t$, then $E_{t_j\restriction i}\cap U_{l_j}\neq\emptyset$ for all $i\in \omega$.
Therefore $\{E_{t_j\restriction i}:i\in \omega\}$ converges in $(\mathfrak{E},\mathcal{W})$ to a point $p_j\in \mathfrak{E}$; this holds for all $j\in\{1,\ldots,n\}$.

Therefore $\{E_{t_1\restriction i} \times \ldots\times E_{t_n\restriction i}: i \in \omega\}$ converges in $(\mathfrak{E}^n,\mathcal{W}^n)$ to $(p_1,\ldots,p_n)$. So $\{\varphi_n[E_{t_1\restriction i} \times \ldots\times E_{t_n\restriction i}]: i \in \omega\}$ converges in $\mathcal{F}_n(\mathfrak{E},\mathcal{W})$ to $\varphi_n((p_1,\ldots,p_n))$.

\end{proof}

We are ready to prove the theorem.

\begin{proof}[Proof of Theorem \ref{FE}]

In Theorem 1.2 from \cite{zaragoza} it is proved that for each $n\in \mathbb{N}$ the topology $\mathcal{W}_n$, the tree $T^n$ and collection $\mathcal{E}_n=\{\varphi_n(E _{s_1}\times \ldots \times  E_{s_n }): s_1*\ldots *s_n\in T^n \}$ satisfy conditions (1), (3) and (4) of Theorem \ref{EE} for $\mathcal{F}_n(\mathfrak{E})$. Also, by Lemma \ref{ancla}, $\{\mathcal{U}_F:F\in\mathcal{F}_n(X)\}$ are anchors so condition (2) of Theorem \ref{EE} for $\mathcal{F}_n(\mathfrak{E})$ is also satisfied. 
Consider the tree
$$T=\{\emptyset\}\cup \{\langle n\rangle^{\frown} s_1*\ldots *s_n :n\in\omega,\, s_1*\ldots *s_n\in T^n\}.$$ 
Let $X_{\emptyset}=\mathcal{F}(\mathfrak{E})$ and $X_{\langle n\rangle^{\frown} s_1*\ldots *s_n }= \varphi_n[E _{s_1}\times \ldots \times  E_{s_n }]$ for each $n\in \omega$ and $s_1*\ldots *s_n \in T^n$. Let $\mathcal{W}^\prime$ be the Vietoris topology in $\mathcal{F}(\mathfrak{E}, \mathcal{W})$. 
 Then we will prove that $\mathcal{W}^\prime$, $T$, $\mathcal{S}=\{X_s: s\in T\}$ and $\{\mathcal{U}_F: F\in \mathcal{F}(\mathfrak{E})\}$ satisfy the conditions required in Theorem \ref{EE} for $\mathcal{F}(\mathfrak{E})$. Indeed, the fact that the Vietoris topology in $\mathcal{F}(\mathfrak{E}, \mathcal{W})$ witnesses that $\mathcal{F}(\mathfrak{E})$ is almost zero-dimensional follows from the proof of Proposition 2.2 in \cite{zaragoza}.
On the other hand, for each $s_1*\ldots * s_n \in T_n$, $\varphi_n[E_{s_1}\times \ldots \times  E_{s_n}]$ is a closed
subset of $\mathcal{F}_n(\mathfrak{E}, \mathcal{W})$, hence  $\varphi_n[E_{s_1}\times \ldots\times E_{s_n}]$ is closed in $\mathcal{F}(\mathfrak{E}, \mathcal{W})$, because  $\mathcal{F}_n(\mathfrak{E}, \mathcal{W})$ is closed in $\mathcal{F}(\mathfrak{E}, \mathcal{W})$. 

For $\emptyset\in T$, we have that $succ_T(\emptyset)= \{\langle n \rangle^\frown \emptyset_1*\ldots * \emptyset_n: n\in \omega, \emptyset_1*\ldots * \emptyset_n\in T_n \}$ then $X_{\langle n \rangle^\frown \emptyset_1*\ldots * \emptyset_n}=\mathcal{F}_n(\mathfrak{E})$. Hence $X_{\langle n \rangle^\frown \emptyset_1*\ldots * \emptyset_n}$ is nowhere dense in $X_{\emptyset}$ and $X_{\emptyset}=\bigcup_{t\in succ_T(\emptyset)} X_{t}$. On the other hand, if $\langle n\rangle^{\frown} s_1*\ldots *s_n\in T\setminus \{\emptyset\}$, $succ_T(\langle n\rangle^{\frown} s_1*\ldots *s_n )=\{ \langle n\rangle^{\frown} t: t\in succ_{T^n}(s_1*\ldots *s_n )\}$.
 Then for each $\langle n\rangle^{\frown} s_1*\ldots *s_n \in T\setminus \{\emptyset\}$, we have $X_{\langle n\rangle^{\frown} s_1*\ldots *s_n }=\bigcup\{X_{\langle n\rangle^{\frown} t}: t\in succ_{T^n}(s_1*\ldots *s_n)\}$ and $X_{\langle n\rangle^{\frown} t}$ is nowhere dense in $X_{\langle n\rangle^{\frown} s_1*\ldots *s_n}$ if $t\in succ_T(s_1*\ldots *s_n)$. Thus, conditions (1) and (3) of Theorem \ref{EE} for $\mathcal{F}(\mathfrak{E})$ are satisfied. Now we prove condition (2). Let $F\in\mathcal{F}(\mathfrak{E})$. Suppose that $F=\{x_1,\ldots, x_k\}$ with $x_j\neq x_i$ if $i\neq j$ so $\mathcal{U}_F=\langle U_{x_1},\ldots, U_{x_k}\rangle$. We claim that $\mathcal{U}_F$ is an anchor in $\mathcal{W}^\prime$. Let $\widehat{t}\in[T]$, then there exists $n\in \omega$ such that $\widehat{t}=\langle n\rangle^\frown \widehat{t}_n$ and $\widehat{t}_n\in [T^n]$. Also, there exist $t_i\in [T]$ such that $\widehat{t}_n=(t_1,\ldots, t_n)$.\\\\
 \noindent\textbf{Case 1}:
  $k\leq n$. By Lemma \ref{ancla}, $\mathcal{U}_F\cap\mathcal{F}_n(\mathfrak{E})$ is a anchor of $F$ in $(\mathcal{F}_n(\mathfrak{E}),\mathcal{W}_n)$. 
We will show that $\mathcal{U}_F\cap X_{\widehat{t}\restriction i}=\emptyset$ for some $i\in \omega$, or that $(X_{\widehat{t}\restriction i})_{i\in \omega}$ converges in $(\mathcal{F}(\mathfrak{E}),\mathcal{W}^\prime)$. If $\mathcal{U}_F\cap X_{\widehat{t}\restriction i}=\emptyset$ for some $i\in \omega$, we are finished. If $\mathcal{U}_F\cap X_{\widehat{t}\restriction i}\neq\emptyset$ for each $i\in \omega$, as $X_{\widehat{t}\restriction {i+1}}=X_{\langle n\rangle^\frown \widehat{t}_n \restriction i} =\varphi_n[E_{t_1\restriction i}\times \ldots \times E_{t_n\restriction i}]\subset \mathcal{F}_n(\mathfrak{E})$ then $(\mathcal{U}_F\cap \mathcal{F}_n(\mathfrak{E}))\cap X_{\widehat{t}\restriction i}\neq\emptyset$ for each $i\in \omega$. Hence $(X_{\widehat{t}\restriction i})_{i\in \omega}$ converges in $(\mathcal{F}(\mathfrak{E}),\mathcal{W}^\prime)$ because $\mathcal{U}_F\cap\mathcal{F}_n(\mathfrak{E})$ is a anchor of $F$ in $(\mathcal{F}_n(\mathfrak{E}),\mathcal{W}_n)$.
So $\mathcal{U}_F$ is anchor of $F$ in $(\mathcal{F}(\mathfrak{E}),\mathcal{W}^\prime)$.
\\\\
\noindent \textbf{Case 2}:
  $k> n$. For each $j\in (n,k]$, 
let's consider $t_{n+1}=\ldots =t_k=t_n$. Let $\widehat{s}=(t_1,\ldots, t_n, t_{n+1}, \ldots, t_k)$, then $\widehat{s}\in [T^k]$. From case 1, we have that $X_{\langle k\rangle^\frown\widehat{s}\restriction j} \cap \mathcal{U}_F=\emptyset$ if $j\geq m$ or $(X_{\langle k\rangle^\frown\widehat{s}\restriction j} )_{j\in \omega}$ converges. We claim that either $X_{\widehat{t}\restriction j} \cap \mathcal{U}_F=\emptyset$ for some $j\geq m$ or $(X_{\widehat{t}\restriction j} )_{j\in \omega}$ converges. 
Note that for all $j\in\omega$ $$(*)\textit{   }\varphi_n[ E_{t_1\restriction j}\times \cdots\times E_{t_n\restriction j}]\subset \varphi_k[ E_{t_1\restriction j}\times \ldots\times E_{t_n\restriction j}\times E_{t_{n+1}\restriction j}\times\cdots\times E_{t_k\restriction j}].$$
Let's suppose that $X_{\widehat{t}\restriction j} \cap \mathcal{U}_F\neq\emptyset$ for each $j\in \omega$. By $(*)$ we have $X_{\langle k\rangle^\frown\widehat{s}\restriction j} \cap \mathcal{U}_F\neq\emptyset$, then $(X_{\langle k\rangle^\frown\widehat{s}\restriction j})_{j\in \omega}$ converges to some $A\in\mathcal{F}_k(\mathfrak{E})$.
Hence $(\varphi_n[ E_{t_1\restriction j}\times \cdots\times E_{t_n\restriction j}])_{j\in \omega}$ converges to $A$. Then $(X_{\widehat{t}\restriction j} )_{j\in \omega}$ converges to $A$.\\\\
Thus, condition (2) of Theorem \ref{EE} for $\mathcal{F}(\mathfrak{E})$ is satisfied. By Lemma \ref{Coh}, $\mathcal{F}(\mathfrak{E})$ is $\{X_{\langle n\rangle^{\frown} s} : n\in \N, s\in T^n\}$-cohesive. Let $F\in \mathcal{F}(\mathfrak{E})$ such that $F\in \mathcal{W}$ and let $\mathcal{W}\in \mathcal{C}$ be the open set given by Lemma \ref{Coh} that does not contain non-empty clopen subsets of any element of $\{X_{\langle n\rangle^{\frown} s} : n\in \N, s\in T^n\}$.
We claim that $\mathcal{W}$ does not contain non-empty clopen subsets of $\mathcal{F}(\mathfrak{E})$. Suppose otherwise, then there is an clopen non-empty $\mathcal{O}$ subset of $\mathcal{F}(\mathfrak{E})$ such that $O\subset \mathcal{W}$. Note that  for each $n\in \mathbb{N}$, we have $\mathcal{O}\cap X_{\langle n\rangle^{\frown} s}=\emptyset$ because $\mathcal{W}$ does not contain clopen of any element of  $\{X_{\langle n\rangle^{\frown} s} : n\in \N, s\in T^n\}$. Thus, $\mathcal{O}$ is empty and this is a contradiction. This implies that in fact $\mathcal{F}(\mathfrak{E})$ is $\{X_t:t\in T\}$-cohesive. Thus, condition (4) of Theorem \ref{EE} for $\mathcal{F}(\mathfrak{E})$ is satisfied. Then by Theorem \ref{EE} we have  that $\mathcal{F}(\mathfrak{E})$ is homeomorphic to $\mathfrak{E}$.
\end{proof}
\section{An application to Sierpiński stratifications }

A system of sets $(X_s)_{s\in T}$ is a Sierpiński stratification (\cite[Definition 7.1, p. 31]{ME}) of a space $X$ if:
\begin{enumerate}
\item T is a non-empty tree over a countable alphabet,
\item each $X_s$ is a closed subset of $X$,
\item $X_{\emptyset}=X$, $X_s=\bigcup\{X_t:t\in succ(s)\}$
\item if $\sigma\in [T]$ then the sequence $X_{\sigma\restriction 1},\ldots, X_{\sigma\restriction n},\ldots$ converges to a point in $X$.
\end{enumerate}

Note that in Theorem 1.1 of \cite{zaragoza} it is implicitly proved that if $(X_s)_{s\in T}$ is a Sierpiński stratification of a space $X$, then $\mathcal{C}_n=\{\varphi_n[X_{s_1}\times \ldots\times X_{s_n}]: s_1*\ldots *s_n\in T^n \}$ is a Sierpiński stratification of $\mathcal{F}_n(X)$. Define $X_\emptyset=X$ and $ X_{\langle n\rangle^{\frown} s_1*\ldots *s_n }=\varphi_n[X_{s_1}\times \ldots\times X_{s_n}]$ for $n\in\N$ and $s_1*\ldots *s_n\in T^n$. Then by an argument similar to the proof of Theorem \ref{FE} we obtain that $\mathcal{C}=\{X_\emptyset\}\cup\{X_{\langle n\rangle^{\frown} s_1*\ldots *s_n }:n\in \N, s_1*\ldots *s_n\in T^n\}$   is a Sierpiński stratification of $\mathcal{F}(X)$. So we have the following Corollary.
\begin{cor}\label{se}
Let $(X_s)_{s\in T}$ be Sierpiński stratification of a space $X$. Then 
\begin{enumerate}
\item For each $n\in \mathbb{N}$, $\mathcal{C}_n$ is a Sierpiński stratification of $\mathcal{F}_n(X)$.
\item   $\mathcal{C}$   is a Sierpiński stratification of $\mathcal{F}(X)$. 

\end{enumerate}
\end{cor}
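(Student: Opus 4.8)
The plan is to mirror the proof of Theorem \ref{FE}, replacing the role of Theorem \ref{EE} by the (much simpler) definition of a Sierpiński stratification. For part (1), I would unwind what ``implicitly proved in Theorem 1.1 of \cite{zaragoza}'' amounts to: given a Sierpiński stratification $(X_s)_{s\in T}$ of $X$, one must check that $\mathcal{C}_n=\{\varphi_n[X_{s_1}\times\cdots\times X_{s_n}]:s_1*\ldots*s_n\in T^n\}$, indexed by the tree $T^n$, satisfies conditions (1)--(4) of the definition. Condition (1) is clear since $T^n$ is a tree over the countable alphabet $\bigcup_{m}(\text{alphabet})^m$ (or one simply takes products of the original alphabet level by level). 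Condition (2) holds because each $\varphi_n[X_{s_1}\times\cdots\times X_{s_n}]$ is a continuous image of a compact-times-closed product inside $X^n$; more precisely, $X_{s_1}\times\cdots\times X_{s_n}$ is closed in $X^n$ and $\varphi_n$ is a closed map onto $\mathcal{F}_n(X)$ when restricted appropriately, so the image is closed in $\mathcal{F}_n(X)$. Condition (3), the decomposition $\varphi_n[X_{s_1}\times\cdots\times X_{s_n}]=\bigcup\{\varphi_n[X_{t_1}\times\cdots\times X_{t_n}]:t_i\in succ(s_i)\}$, is a direct computation using $X_{s_i}=\bigcup\{X_{t_i}:t_i\in succ(s_i)\}$ and distributivity of $\varphi_n$ over unions.

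The only genuinely substantive point in part (1) is condition (4): if $\sigma=\sigma_1*\ldots*\sigma_n\in[T^n]$ then $(\varphi_n[X_{\sigma_1\restriction j}\times\cdots\times X_{\sigma_n\restriction j}])_{j\in\omega}$ must converge to a point of $\mathcal{F}_n(X)$. Here I would argue exactly as in Lemma \ref{ancla}: for each coordinate $i\leq n$ the sequence $(X_{\sigma_i\restriction j})_{j}$ converges in $X$ to a point $p_i$ by condition (4) of the stratification of $X$, hence $(X_{\sigma_1\restriction j}\times\cdots\times X_{\sigma_n\restriction j})_j$ converges in $X^n$ to $(p_1,\ldots,p_n)$, and then by continuity of $\varphi_n$ the sequence $(\varphi_n[X_{\sigma_1\restriction j}\times\cdots\times X_{\sigma_n\restriction j}])_j$ converges in $\mathcal{F}_n(X)$ to $\varphi_n(p_1,\ldots,p_n)=\{p_1,\ldots,p_n\}$. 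One should note that ``$A_j\to p_i$ in $X$'' for the sets $A_j=X_{\sigma_i\restriction j}$ together with continuity of $\varphi_n$ yields the convergence of the images; this is a routine fact about the Vietoris topology once one recalls that $\varphi_n$ is continuous from $X^n$ to $\mathcal{F}_n(X)$.

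For part (2), I would define $T=\{\emptyset\}\cup\{\langle n\rangle^{\frown}s_1*\ldots*s_n:n\in\N,\ s_1*\ldots*s_n\in T^n\}$, which is again a tree over a countable alphabet, set $X_\emptyset=\mathcal{F}(X)$, and $X_{\langle n\rangle^{\frown}s_1*\ldots*s_n}=\varphi_n[X_{s_1}\times\cdots\times X_{s_n}]$. Conditions (1), (2), (3) are then inherited from part (1) together with the observations that each $\mathcal{F}_n(X)$ is closed in $\mathcal{F}(X)$, that $X_\emptyset=\bigcup_{n\in\N}\mathcal{F}_n(X)$ — wait, one must be careful: $succ_T(\emptyset)=\{\langle n\rangle^{\frown}\emptyset_1*\ldots*\emptyset_n:n\in\N\}$ and $X_{\langle n\rangle^{\frown}\emptyset_1*\ldots*\emptyset_n}=\varphi_n[X\times\cdots\times X]=\mathcal{F}_n(X)$, so $\bigcup_{t\in succ_T(\emptyset)}X_t=\bigcup_n\mathcal{F}_n(X)=\mathcal{F}(X)=X_\emptyset$, as required. (If one also wishes the single-point level $\mathcal{F}_1(X)=X$ one simply starts $n$ at $1$.) For condition (4), let $\tau\in[T]$; then $\tau=\langle n\rangle^{\frown}\widehat{\tau}$ for a fixed $n$ and $\widehat{\tau}=(\tau_1,\ldots,\tau_n)\in[T^n]$, and from some level on $X_{\tau\restriction j}=\varphi_n[X_{\tau_1\restriction(j-1)}\times\cdots\times X_{\tau_n\restriction(j-1)}]$, so by part (1)'s verification of condition (4) this sequence converges in $\mathcal{F}_n(X)\subset\mathcal{F}(X)$ to a point.

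The main obstacle I anticipate is not conceptual but a bookkeeping one: making precise the claim that convergence $X_{\sigma_i\restriction j}\to p_i$ in $X$ (a statement about sets shrinking to a point in the Vietoris topology, i.e.\ for every open $V\ni p_i$ eventually $X_{\sigma_i\restriction j}\subset V$) interacts correctly with the product and with $\varphi_n$. The cleanest route is to observe that convergence of $A_j$ to the point $p$ in $\mathcal{K}(X)$ is equivalent to: every neighbourhood of $p$ eventually contains $A_j$; then a basic Vietoris neighbourhood $\langle V_1,\ldots,V_k\rangle$ of $\{p_1,\ldots,p_n\}=\varphi_n(p_1,\ldots,p_n)$ is handled by shrinking to disjoint $V$'s around the distinct values among the $p_i$ and using eventual containment coordinatewise; this is precisely the argument already carried out in Lemma \ref{ancla} in the harder ``anchor'' setting, so I would simply cite and adapt it. Everything else — closedness of the pieces, the union decompositions, the tree structure — is formal and parallels the proof of Theorem \ref{FE} line by line.
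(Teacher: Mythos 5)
Your proposal is correct and follows essentially the same route as the paper, which gives no separate argument for Corollary \ref{se} beyond noting that part (1) is implicit in Theorem 1.1 of \cite{zaragoza} and that part (2) follows ``by an argument similar to the proof of Theorem \ref{FE}''; your write-up simply unwinds those two citations, checking the four conditions of a Sierpi\'nski stratification for $\mathcal{C}_n$ (with the branch-convergence step being the unconditional version of the argument in Lemma \ref{ancla}) and then assembling the tree $T=\{\emptyset\}\cup\{\langle n\rangle^{\frown}s_1*\ldots*s_n\}$ exactly as in the proof of Theorem \ref{FE}. In fact you supply more detail than the paper does, so there is nothing to correct.
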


Van Engelen proved in (\cite[Theorem A.1.6]{van})  that a zero dimensional $ X $ space is homeomorphic to $\mathbb{Q}^\omega$ if $X$ has a Sierpiński stratification $(X_s)_{s\in T}$ such that $X_t$ is nowhere dense in $X_s$, if $t\in succ(s)$. Using this fact, the following corollary can be proved.

\begin{cor}
The spaces $\mathcal{F}_n(\mathbb{Q}^\omega)$, $\mathcal{F}(\mathbb{Q}^\omega)$ are homeomorphic  to $\mathbb{Q}^\omega$.
\end{cor}
\begin{proof}
Let $(X_s)_{s\in T}$ be Sierpiński stratification of a space $\mathbb{Q}^\omega$ such that $B_t$ is nowhere dense in $B_s$, if $t\in succ(s)$. By Corollary \ref{se} for each $n\in \mathbb{N}$, $\mathcal{C}_n$ and $\mathcal{C}$ are Sierpiński stratifications of $\mathcal{F}_n(X)$ and  $\mathcal{F}(X)$, 
respectively. Notice that we have that $\varphi_n[X_{t_1}\times \ldots\times X_{t_n}]$ is nowhere dense in  $\varphi_n[X_{s_1}\times \ldots\times X_{s_n}]$ if $t_1*\ldots* t_n\in succ(t_1*\ldots* t_n)$ and that $ X_{\langle n\rangle^{\frown} t_1*\ldots *t_n}$ is nowhere dense in $X_{\langle n\rangle^{\frown} s_1*\ldots *s_n}$ if $\langle n\rangle^{\frown} t_1*\ldots *t_n\in succ(\langle n\rangle^{\frown} s_1*\ldots *s_n)$. 
\end{proof}

David Lipham has proved in \cite{lipham} that $ X $ is an $ \mathfrak{E} $ - factor, if and only if it admits a Sierpi\'nski stratification $ (B_s) _ {s \in T} $. We can
consider the sets $ B_s $ as subsets of $ (X, \mathcal{W}) $, where $ \mathcal{W} $ is a topology witness to the almost zero dimensionality of $ X $. A natural question is this.
\begin{ques}
 If $ X $ is a $ \mathfrak {E} $ - factor, under what conditions is $ (X, \mathcal{W}) $ homeomorphic to $ \mathbb{Q}^ \omega $?
\end{ques}

The next natural question in this direction is whether the hyperspace $\mathcal{K}(\mathfrak{E})$ is homeomorphic to $\mathfrak{E}$. However, in a personal communication Professor Jan van Mill explained that $\mathcal{K}(\mathfrak{E})$ is not Borel. The argument goes as follows. By Theorem 9.2 in \cite{ME} there is a closed copy of $\mathbb{Q}$ in $\mathfrak{E}$. Thus, there is a closed copy of $\mathcal{K}(\mathbb{Q})$ inside $\mathcal{K}(\mathfrak{E})$. However, $\mathcal{K}(\mathbb{Q})$ is not Borel by a result of Hurewicz (see \cite[33.5]{Ke}). This implies that $\mathcal{K}(\mathfrak{E})$ is not Borel.

However, there is another direction that is worth exploring. Michalewski proved in \cite{mich} that $\mathcal{K}(\mathbb{Q})$ is a topological group. Also, the following follows directly from Theorem \ref{FE}.

\begin{cor}
$\mathcal{F}(\mathfrak{E})$ is homogeneous.
\end{cor}

Thus, the following is a natural question.

\begin{ques}
Is $\mathcal{K}(\mathfrak{E})$ homogeneous?
\end{ques}

\end{document}